\RequirePackage{fix-cm}
\documentclass[11pt]{amsart} 
\usepackage{graphicx}
\usepackage{mathptmx}
\usepackage{latexsym,enumerate,hyperref,amsmath,amssymb,
amscd,enumerate,engord,amsfonts,graphics}
\usepackage[all]{xy}

\newtheorem{thm}{Theorem}
\newtheorem{lem}[thm]{Lemma}
\newtheorem{prop}[thm]{Proposition}
\newtheorem{cor}[thm]{Corollary}
\newtheorem{defin}[thm]{Definition}
\newtheorem{rem}[thm]{Remark}

\newcommand{\R}{{\mathbb{R}}}
\newcommand{\Z}{{\mathbb{Z}}}

\DeclareMathOperator{\TC}{{\sf TC}}

\DeclareMathOperator{\cat}{\mathrm{cat}}

\DeclareMathOperator{\Hom}{\mathrm{Hom}}
\DeclareMathOperator{\ev}{\mathrm{ev}}
\DeclareMathOperator{\zcl}{\mathrm{zcl}}


\begin{document}

\title{Topological complexity of the Klein bottle}
\author[Daniel C. Cohen]{Daniel C. Cohen\textsuperscript{\dag}}
\address{Department of Mathematics, Louisiana State University, Baton Rouge, LA, USA 70808}
\email{\href{mailto:cohen@math.lsu.edu}{cohen@math.lsu.edu}}
\urladdr{\href{http://www.math.lsu.edu/~cohen/}{www.math.lsu.edu/\char'176cohen}}
\thanks{\textsuperscript{\dag}~D.C.\,partially supported by NSF 1105439.}

\author[Lucile Vandembroucq]{Lucile Vandembroucq\textsuperscript{\ddag}}
\address{Centro de Matem\'atica, Universidade do Minho, Campus de Gualtar, 4710-057 Braga, Portugal}
\email{\href{mailto:lucile@math.uminho.pt}{lucile@math.uminho.pt}}\thanks{\textsuperscript{\ddag}~L.V.\,partially supported by Portuguese Funds through FCT -- Funda\c c\~ao para a Ci\^encia e a Tecnologia, within the Project UID/MAT/00013/2013.}

\begin{abstract}
We show that the normalized topological complexity of the Klein bottle is equal to $4$. For any non-orientable surface $N_g$ of genus $g\geq 2$, we also show that $\TC(N_g)=4$. This completes recent work of Dranishnikov on the topological complexity of non-orientable surfaces.
\keywords{Topological complexity, Klein bottle}
\subjclass[2010]{55M30, 55N25, 57T30, 20J06} 
\end{abstract}

\maketitle

\section{Introduction}
\label{intro}
The topological complexity of a space $X$, $\TC(X)$, is a homotopy invariant introduced by M. Farber in \cite{Far} in order to give a topological measure of the complexity of the motion planning problem in robotics. We consider here the normalized version of $\TC$, that is, $\TC(X)$ is the least integer $n$ such that there exists a cover of $X\times X$ by $n+1$ open sets, on each of which the fibration
\[
\ev_{0,1}\colon X^I\to X\times X, \quad \gamma \mapsto (\gamma(0),\gamma(1)),
\]
admits a continuous local section.
Roughly speaking, if $X$ is the space of all possible states of a mechanical system, then we need at least $\TC(X)+1$ rules to determine a complete algorithm which dictates how the system will move from any initial state to any final state. Refer to \cite{FarInv} for further discussion, and as a general reference.

We have, for a CW complex $X$, the following estimates of $\TC(X)$ (see \cite{Far}):
\begin{equation*}
\max\{\cat(X), \zcl_{\Bbbk}(X)\} \leq \TC(X) \leq 2\cat(X)\leq 2\dim(X).
\end{equation*}
Here, $\cat(X)$ is the normalized Lusternik-Schnirelmann category of $X$ and $\zcl_{\Bbbk}(X)$ denotes the \textit{zero-divisors cup-length} of the cohomology of $X$ with coefficients in a field $\Bbbk$. More precisely, $\zcl_{\Bbbk}(X)$ is the nilpotency of the kernel of the cup product $H^*(X;\Bbbk)\otimes H^*(X;\Bbbk)\to H^*(X;\Bbbk)$, the smallest nonnegative integer $n$ such that any $(n+1)$-fold cup product in this kernel is trivial. 

As is well-known, determining the topological complexity of non-orientable surfaces $N_g$ has turned out to be a difficult task. 
For any $g\ge 1$, the aforementioned dimensional upper bound and the zero-divisors cup-length with $\Z_2$ coefficients yield
\[
3\leq \TC(N_g)\leq 4.
\]
The actual value of $\TC$ for $N_1=\R P^2$, equal to $3$, has been first determined through the Farber-Tabachnikov-Yuzvinsky theorem of \cite{FTY} relating  $\TC(\R P^n)$ to the immersion dimension of $\R P^ n$, 
and later recovered by Costa and Farber \cite{FarberCosta} through a computation of zero-divisors cup-length with local coefficients. Recently A. Dranishnikov established in \cite{Dranishnikov} that $\TC(N_g)=4$ for $g\geq 5$. He also showed in \cite{Dranishnikov2} that $\TC(N_4)=4$ and that his methods do not extend to the lower genus cases $g\in \{2,3\}$. Since then, only the case of $N_g$ with $g\in \{2,3\}$ and in particular the case of the Klein bottle $K=N_2$ were still open. Here, we prove:
\begin{thm}\label{TC(K)} The normalized topological complexity of $K$ is equal to $4$.
\end{thm}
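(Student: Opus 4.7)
The plan is to establish the lower bound $\TC(K) \ge 4$, since the upper bound $\TC(K) \le 4$ already follows from the estimate $\TC(X) \le 2\dim(X)$. Because the ordinary zero-divisors cup-length $\zcl_\Bbbk(K)$ stops at $3$ for every field $\Bbbk$, the theorem cannot be proved from the cohomology ring of $K \times K$ with constant coefficients, and the cup-length obstruction must be enhanced. I would adopt the twisted zero-divisors framework of Costa--Farber (the same mechanism that recovered $\TC(\R P^2)=3$), which replaces $\zcl_\Bbbk$ by the nilpotency of the kernel of the cup-product map on the graded cohomology $\bigoplus_j H^j(K \times K; A_j)$ for a family of local coefficient systems $A_j$ built canonically from $\pi_1(K)$.

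First I would fix the coefficient systems. The natural candidate on $K \times K$ arises from the augmentation ideal $I[\pi_1(K)]$, or from a closely related character-twisted version of $\Z$, regarded as a $\pi_1(K) \times \pi_1(K)$-module via the ``difference'' homomorphism $(g,h) \mapsto gh^{-1}$, together with its tensor powers. Since both $K$ and $K \times K$ are aspherical, every cohomology group in question equals the group cohomology of $\pi_1(K)$ or $\pi_1(K)^2$, and the computations can be carried out on an explicit small free resolution of $\Z$ over the Klein bottle group $\pi_1(K) = \langle a, b \mid bab^{-1}a\rangle$.

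Next I would construct twisted zero-divisor classes of low positive degree in the chosen local systems and argue that a suitable four-fold product is nonzero in $H^4(K \times K; A_1 \otimes \cdots \otimes A_4)$. Since $\dim(K \times K)=4$, the product lies in the top degree, where nonvanishing can hold only for a carefully chosen twist; but precisely such a top-degree nonvanishing is what yields $\TC(K) \ge 4$ through the Costa--Farber bound.

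The main obstacle will be the explicit top-degree cup-product computation. The Klein bottle group has a genuinely nontrivial semidirect-product structure $\Z \rtimes \Z$, so the action on $I[\pi_1(K)]$ is nontrivial, and a chain-level Yoneda calculation on the tensor product of two resolutions for $\pi_1(K)$ is needed to verify that the four-fold product survives. Morally one expects the orientation-reversing twist encoded in the relation $bab^{-1}a$ to be precisely the feature that creates a $\Z_2$-valued obstruction in $H^4$ with twisted coefficients that was invisible in ordinary mod-$2$ cohomology of $K \times K$; turning this moral expectation into a rigorous nonvanishing statement is the crux of the argument.
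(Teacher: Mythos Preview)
Your strategic outline matches the paper's approach almost exactly: reduce to the lower bound, invoke the Costa--Farber canonical class $\mathfrak{v}\in H^{1}(K\times K;I(\pi))$ with $\pi=\pi_1(K)$, use asphericality to work in group cohomology via an explicit small resolution, and aim to show $\mathfrak{v}^{4}\neq 0$ in $H^{4}(K\times K;I(\pi)^{\otimes 4})$. So the framework is right, and you have correctly located where the difficulty lies.

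The gap is that you stop precisely at the point you yourself call ``the crux of the argument.'' Saying that a chain-level Yoneda computation on the tensor product of two resolutions is needed, and that one morally expects a $\Z_2$-valued obstruction to appear, is not yet a proof: the raw element $\nu^{4}(\Omega)$ one obtains by evaluating the cocycle on an explicit fundamental cycle is a sum of two dozen terms in $I(\pi)^{\otimes 4}$, and one must show its image in the coinvariants $(I(\pi)^{\otimes 4})_{\pi\times\pi}$ (equivalently, after Poincar\'e duality and reduction to $\Z_2$) is nonzero. This coinvariant module is enormous and has no evident basis, so a direct check is infeasible. The paper's missing idea---which your proposal does not supply---is a carefully chosen sequence of $\pi\times\pi$-equivariant projections that collapse the problem to a finite one: first pass from $G=\pi_1(K)$ to the infinite dihedral quotient $D=G/\langle x^{2}\rangle$, then project $I(G;\Z_2)^{\otimes 4}$ to $I(Y;\Z_2)\otimes\bigwedge^{3}I(D;\Z_2)$ (with $Y\cong\Z/2$ on the first factor), and finally truncate $I(D;\Z_2)$ by the cube of $(y-1)$ to land in a six-dimensional $\Z_2$-space on which the $D\times D$-action can be tabulated by hand. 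Only after these reductions does the nonvanishing become a verifiable statement. Without an idea of this kind, your plan is a correct diagnosis but not yet a proof.
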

\begin{thm} \label{TC(Ng)} For any $g\geq 2$, $\TC(N_g)=4$.
\end{thm}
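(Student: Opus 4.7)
The upper bound $\TC(N_g) \le 4$ is immediate from the dimension estimate $\TC(N_g) \le 2\dim N_g = 4$ already recorded in Section~\ref{intro}. For the lower bound, the case $g=2$ is the content of Theorem~\ref{TC(K)}, and the cases $g \ge 4$ are the main results of Dranishnikov~\cite{Dranishnikov, Dranishnikov2}, so only $g=3$ remains. I would establish $\TC(N_3) \ge 4$ by transferring, along a pinch map $N_3 \to K$, the cohomological obstruction that underlies Theorem~\ref{TC(K)}.

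The connected-sum decomposition $N_3 \cong K \# \R P^2$ provides a pinch map $q\colon N_3 \to K$ collapsing the $\R P^2$ summand; equivalently, since both surfaces are aspherical, $q$ is realized (up to homotopy) by the surjection of fundamental groups
\[
\langle a_1,a_2,a_3 \mid a_1^2 a_2^2 a_3^2 \rangle \twoheadrightarrow \langle a, b \mid a^2 b^2 \rangle, \qquad (a_1, a_2, a_3) \mapsto (a, b, 1).
\]
From the proof of Theorem~\ref{TC(K)} I would extract the nonzero zero-divisor class $u \in H^4(K \times K; \mathcal{L})$, with respect to an appropriate (likely twisted) coefficient system $\mathcal{L}$, that witnesses $\TC(K) \ge 4$. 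Pulling $u$ back via $q \times q$ yields a candidate class $(q \times q)^* u \in H^4(N_3 \times N_3; (q\times q)^*\mathcal{L})$. This pullback is automatically a zero-divisor, since the relation $(q \times q) \circ \Delta_{N_3} = \Delta_K \circ q$ forces it to restrict trivially to the diagonal, and its nonvanishing would force $\TC(N_3) \ge 4$.

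The crux of the argument---and what I expect to be the main obstacle---is verifying that $(q \times q)^* u \neq 0$. Because $q$ is $\pi_1$-surjective, one expects $q^*$ to inject on the $H^1$ carrying the relevant one-dimensional classes, so a presentation of $u$ as a cup product of degree-one classes should transfer factor-by-factor without degeneration. The delicate point is the bookkeeping of the twisted coefficients: one must check that $(q \times q)^*\mathcal{L}$ supports a nontrivial fourfold product and that no unexpected cancellation occurs in the cohomology ring of $N_3 \times N_3$. If this direct transfer fails for the specific $\mathcal{L}$ produced in Theorem~\ref{TC(K)}, the fallback is to replicate the argument of Theorem~\ref{TC(K)} inside $N_3$ itself, exploiting the subgroup $\langle a_1, a_2 \rangle$ of $\pi_1(N_3)$ to import the Klein-bottle computation while treating the extra generator $a_3$ as a topological decoration. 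Either route furnishes $\TC(N_3) = 4$ and, together with the cases $g = 2$ and $g \ge 4$, completes the proof.
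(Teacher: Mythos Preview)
Your core idea---transfer the Klein-bottle obstruction along a pinch map---is exactly the paper's strategy, but you have correctly identified a genuine gap and left it open: you never verify that $(q\times q)^*u\neq 0$. The paper closes this gap not by analysing the cohomological pullback directly, but by passing to the Poincar\'e-dual homological statement set up in Proposition~\ref{FaberCostaNgRevisited} and Proposition~\ref{Z2coeff}. Two observations make the transfer rigorous. First, the canonical cocycle is natural: the surjection $\varphi\colon\pi_1(N_g)\to\pi_1(N_{g-1})$ sending $a_g\mapsto 1$ satisfies $\nu_g = \varphi^*\nu_{g-1}$ at the cochain level, so there is a commutative square relating $\nu_g^4\otimes\mathrm{id}$ and $\nu_{g-1}^4\otimes\mathrm{id}$ on bar chains with $\Z_2$ coefficients. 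Second, an explicit chain map between the Fox-calculus resolutions (Lemma~\ref{lem:lem11}) shows that $H_2(\phi;\Z_2)$ is an isomorphism, hence $\phi\times\phi$ sends the $\Z_2$ fundamental cycle $\Omega^g_{\Z_2}$ to $\Omega^{g-1}_{\Z_2}$. The square then forces $\nu_g^4(\Omega^g_{\Z_2})\mapsto\nu_{g-1}^4(\Omega^{g-1}_{\Z_2})$; since the latter is nonzero (the Klein-bottle computation), so is the former.

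Two further remarks. Your heuristic that ``$q^*$ injects on $H^1$, so the fourfold cup product transfers factor-by-factor'' does not by itself control nonvanishing in top degree with twisted coefficients; the homological reformulation is what makes the argument go through cleanly. And once you have this mechanism, the induction $N_g\to N_{g-1}\to\cdots\to N_2$ handles every $g\ge 2$ uniformly, so the appeal to Dranishnikov for $g\ge 4$ becomes unnecessary.
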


As in \cite{FarberCosta}, our results will follow from computations of zero-divisors cup-lengths with local coefficients. 
For a discrete group $\pi$, we denote by $\Z[\pi]$ the associated integral group ring and by $I(\pi)=\ker(\varepsilon \colon \Z[\pi]\to \Z)$ the augmentation ideal. We will see that, when $\pi=\pi_1(N_g)$ and $g\geq 2$, the fourth power of a particular zero-divisor $\mathfrak{v}\in H^1(N_g\times N_g;I(\pi))$ introduced in \cite{FarberCosta} is not trivial, that is
\[
{\mathfrak{v}}^{4}\neq 0\  \text{in}\  H^{4}(N_g\times N_g;I(\pi)^{\otimes 4}),
\]
from which we obtain that $\TC(N_g)\geq 4$. Our computations will be based on an explicit calculation, in the case of the Klein bottle, of a cocycle representing $ {\mathfrak{v}}^{4}$ and the evaluation of this cocycle on a cycle which is not homologically trivial. 

The techniques utilized here inform on the homotopy cofibre $C_\Delta$ of the diagonal map $\Delta\colon K \to K \times K$ of the Klein bottle. In \cite{Dranishnikov}, Dranishnikov shows that the (normalized) Lusternik-Schnirelmann category of $C_\Delta$ is equal to $3$. Our methods may be used to recover this result. Discussion of this application will appear elsewhere~\cite{CV}.

Our results also complete recent work of J. Gonz\'alez, B. Guti\'errez, D. Guti\'errez and A. Lara \cite{GGGL} on the higher topological complexity of the non-orientable surfaces. Recall that the $s$-topological complexity of a space $X$, $\TC_s(X)$, 
introduced by Y. Rudyak in \cite{Rudyak}, satisfies $\TC_2=\TC$. 
Using the (higher) zero-divisors cup-length with coefficients in $\Z_2$, the authors of \cite{GGGL} compute $\TC_s(N_g)$ for any $s\geq 3$ and $g\geq 1$.

Lastly, in the case $g=2$, D.\,Davis independently develops an alternate approach to the obstruction ${\mathfrak{v}}^{4}$  based on a $\triangle$-complex structure of the Klein~bottle in \cite{Davis}. 

\section{Preliminaries} \label{sec:prelim}
\subsection{Some notations} \label{sec:notation}
We recall here some standard notations and fix some conventions we will use throughout the paper. For a discrete group $\pi$ (with unit $1$), we denote by $\bar{a}$ the inverse of an element $a\in \pi$ and by $\Z[\pi]$ the associated integral group ring. All our $\Z[\pi]$-modules will be left modules. If $M$ is a $\Z[\pi]$-module, we denote by $M_G$ the coinvariants of $M$, 
\[
M_G=M/{\rm span}_{\Z}\{m-a\cdot m \,|\, m\in M, a\in \pi\}.
\]
For (left) $\Z[\pi]$-modules $I$ and $J$, the tensor product $I\otimes J=I\otimes_{\Z}J$ is a left $\Z[\pi]$-module via the diagonal action of $\pi$, and we denote by  $I\otimes_{\pi}J$ the coinvariants of $I\otimes J$.

\subsection{Topological complexity and canonical TC class} \label{sec:tc class}
As mentioned in the introduction, we consider the normalized version of Farber's topological complexity:

\begin{defin} The topological complexity of a topological space $X$, $\TC(X)$, is the least integer $n$ such that there exists a cover of $X\times X$ by $n+1$ open sets on each of which the fibration
\[
\ev_{0,1}\colon X^I\to X\times X, \quad \gamma \mapsto (\gamma(0),\gamma(1)),
\]
admits a continuous local section.
\end{defin}

As in \cite{FarInv}, we will consider the cohomological lower bound of $\TC$ given by the \textit{zero-divisor cup-length} in cohomology with local coefficients. Recall that a local system of coefficients on a space $Y$ is a $\Z[\pi_1(Y)]$-module.

Let $A$ be a local coefficient system on $X\times X$ and let $A|X$ be the local system on $X$ induced by the diagonal map $\Delta\colon X\to X\times X$. A cohomology class $\upsilon\in H^*(X\times X;A)$ is called a \textit{zero-divisor} if 
\[
\Delta^*(\upsilon)=0\ \text{in}\  H^*(X;A|X).
\]
By \cite[Corollary 4.40]{FarInv}, if the cup product of $n$ zero-divisors $u_i\in H^*(X\times X;A_i)$ is nonzero 
in $H^*(X\times X; A_1\otimes\cdots\otimes A_n)$, then $\TC(X)\geq n$.

In particular, if we take coefficients in a field $\Bbbk$ (with the group ring $\Z[\pi_1(X\times X)]$ acting trivially), the zero-divisors can be identified with the elements of the kernel of the cup product $\cup\colon H^*(X;{\Bbbk})\otimes H^*(X;{\Bbbk})\to H^*(X;{\Bbbk})$ and \cite[Corollary 4.40]{FarInv} specializes to \cite[Theorem 7]{Far} on the zero-divisors cup-length of $H^*(X;{\Bbbk})$.

In \cite{FarberCosta}, Costa and Farber associate to a space $X$ a canonical zero-divisor, which we call the \textit{canonical $\TC$ class} and describe next. Let $\pi=\pi_1(X)$ be the fundamental group of $X$.  Let $I(\pi)=\ker(\varepsilon\colon\Z[\pi]\to \Z)$ be the augmentation ideal. Recall that  $\Z[\pi]$ and $I(\pi)$ are both (left) $\Z[\pi\times \pi]$-modules through the action given by:
\[
(a, b) \cdot \sum n_ia_i= \sum n_i (aa_i\bar{b}).
\]
Here $n_i\in \Z$ and $a,b,a_i\in \pi$.
We denote by ${\mathfrak{v}}={\mathfrak{v}}_X\in H^ 1(X\times X;I(\pi))$ the cohomology class induced by the crossed homomorphism (see \cite{FarberCosta} or the next section)
\[
\pi\times \pi \to I(\pi), \,\,(a, b)\mapsto a\bar{b}-1.
\]
The class ${\mathfrak{v}}$ is a zero-divisor and from \cite{FarberCosta} we have:
\begin{thm}(\cite[Theorem 7]{FarberCosta})\label{FaberCosta}  Suppose that $X$ is a CW-complex of dimension $n\geq 2$. Then $\TC(X)=2n$ if and only if the $2n$-th power of ${\mathfrak{v}}$ does not vanish:
\[
\TC(X)= 2n \Longleftrightarrow  {\mathfrak{v}}^{2n}\neq 0\ \text{in}\   H^{2n}(X\times X;I(\pi)^{\otimes 2n}).
\]
\end{thm}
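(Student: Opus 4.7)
The plan is to prove both implications by combining the cohomological lower bound for $\TC$ with obstruction theory for the path fibration. For the reverse implication, assuming $\mathfrak{v}^{2n}\neq 0$, I would first verify that $\mathfrak{v}$ is itself a zero-divisor: the defining crossed homomorphism $(a,b)\mapsto a\bar b-1$ vanishes on the diagonal subgroup $\{(a,a)\}\subset\pi\times\pi$, so $\Delta^*\mathfrak{v}=0$ in $H^1(X;I(\pi)|X)$. The lower bound $\TC(X)\geq 2n$ then follows directly from \cite[Corollary 4.40]{FarInv} applied to the $2n$-fold cup product $\mathfrak{v}^{2n}$. Combined with the dimensional upper bound $\TC(X)\leq 2\cat(X)\leq 2\dim X=2n$ recalled in the introduction, this forces $\TC(X)=2n$.

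The forward implication is the substantive direction. Here I would interpret $\mathfrak{v}$ as a primary obstruction to the motion-planning problem. Using the identification $\TC(X)=\secat(\ev_{0,1})$ and the fact that the fibre of $\ev_{0,1}$ is $\Omega X$, the monodromy action of $\pi_1(X\times X)=\pi\times\pi$ on $\pi_0(\Omega X)=\pi$ is $(a,b)\cdot x=ax\bar b$. Taking the canonical constant-path section over the diagonal as a reference, the obstruction to extending this section along the $1$-skeleton of $X\times X$ is a cocycle in $Z^1(X\times X;I(\pi))$ which one checks by direct computation to represent precisely the class $\mathfrak{v}$.

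I would then compute the higher obstructions via the Ganea--Schwartz tower: let $G_k\to X\times X$ be the $k$-fold fibrewise join of $\ev_{0,1}$ with itself over $X\times X$. One has $\secat(\ev_{0,1})\leq k$ iff $G_k$ admits a global section, and the fibre of $G_k$ has first non-trivial local homotopy group $I(\pi)^{\otimes k}$ equipped with the diagonal $\Z[\pi\times\pi]$-action that underlies the cup product. By naturality of obstruction theory under successive fibre joins, the primary obstruction to a section of $G_k$ is the cup power $\mathfrak{v}^k\in H^k(X\times X;I(\pi)^{\otimes k})$. When $k=2n$, this class lives in top cohomology (as $\dim(X\times X)=2n$), so it is simultaneously necessary and sufficient: $\TC(X)\leq 2n-1$ iff $\mathfrak{v}^{2n}=0$, which yields the stated equivalence.

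The main obstacle is the identification of the iterated Ganea obstruction with an honest cup power of $\mathfrak{v}$, rather than merely a class lying in the correct cohomology group. This requires tracking how the successive fibre joins interact with the diagonal action of $\pi\times\pi$, and verifying that the Postnikov invariants of successive Ganea fibres tensor together compatibly with the external cup product in $H^*(X\times X;-)$. Once this compatibility is established, the vanishing of intermediate obstructions in degrees $<2n$ inside a space of dimension $2n$, together with the dimensional upper bound on $\TC$, closes out the argument.
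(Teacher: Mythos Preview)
The paper does not prove this theorem at all: it is quoted verbatim from Costa--Farber \cite{FarberCosta} and used as a black box, so there is no ``paper's own proof'' to compare against. What you have written is a plausible reconstruction of the Costa--Farber argument itself, which in turn rests on Schwarz's obstruction theory for the sectional category \cite{Sch}.

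Your outline is essentially correct in spirit. The easy direction is fine. For the hard direction, the Schwarz/Ganea tower argument you describe is the right mechanism: the fibre of the relevant stage is the $2n$-fold join $(\Omega X)^{*2n}$, which is $(2n-2)$-connected with $\pi_{2n-1}\cong I(\pi)^{\otimes 2n}$, so in a $2n$-dimensional complex the primary obstruction in $H^{2n}(X\times X;I(\pi)^{\otimes 2n})$ is the unique obstruction. Two caveats. First, your indexing is inconsistent: with $G_k$ defined as the $k$-fold fibrewise join (so $G_1=X^I$), one has $\secat(\ev_{0,1})\le k$ iff $G_{k+1}$ admits a section, and the primary obstruction for $G_k$ lies in degree $k$ with coefficients $I(\pi)^{\otimes k}$; the equivalence you need is for $G_{2n}$, not $G_{2n-1}$, but the conclusion is unchanged. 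Second, the ``main obstacle'' you flag---identifying the obstruction with the honest cup power $\mathfrak v^{2n}$ rather than some unspecified class---is not something you can resolve by ``tracking Postnikov invariants'' ad hoc. This is a general theorem of Schwarz: the primary obstruction to sectioning the $k$-fold fibrewise join of any fibration equals the $k$-th cup power of the primary obstruction for the original fibration. You should cite \cite{Sch} (or the Berstein--Schwarz class literature) rather than try to reprove this compatibility.
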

Here $I(\pi)^{\otimes 2n}=I(\pi)\otimes_\Z I(\pi) \otimes_\Z \cdots \otimes_\Z I(\pi)$ is the tensor product of $2n$ copies of $I(\pi)$, with the diagonal action of $\pi\times \pi$.

If $X=N_g$ is a non-orientable surface, it is well-known (and established through an easy calculation) that the zero-divisors cup-length of $H^ *(N_g;\Z_2)$ is equal to $3$, which implies that $\TC(N_g)\geq 3$. Therefore, $3\leq \TC(N_g)\leq 4$ and, if $\pi=\pi_1(N_g)$, Theorem \ref{FaberCosta} specializes to 
\begin{cor}\label{FaberCostaNg} $\TC(N_g)= 4 \Longleftrightarrow {\mathfrak{v}}^{4}\neq 0\ \text{in}\  H^{4}(N_g\times N_g;I(\pi)^{\otimes 4})$.
\end{cor}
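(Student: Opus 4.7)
The plan is to derive Corollary~\ref{FaberCostaNg} as an immediate specialization of Theorem~\ref{FaberCosta} to the case $X = N_g$. The non-orientable surface $N_g$ admits a CW structure of dimension $n = 2$ (for instance as a $2g$-gon with boundary identifications, or via any other cellular decomposition of this closed surface). This verifies the hypothesis $n \geq 2$ of Theorem~\ref{FaberCosta}. Substituting $n = 2$ into the biconditional $\TC(X) = 2n \Longleftrightarrow {\mathfrak{v}}^{2n} \neq 0$ in $H^{2n}(X \times X; I(\pi)^{\otimes 2n})$ yields exactly the claim $\TC(N_g) = 4 \Longleftrightarrow {\mathfrak{v}}^{4} \neq 0$ in $H^{4}(N_g \times N_g; I(\pi)^{\otimes 4})$, with $\pi = \pi_1(N_g)$.

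The interest of this corollary comes from the surrounding context rather than the derivation itself. As noted in the Introduction, the general bounds already pin down $3 \leq \TC(N_g) \leq 4$, where the lower bound follows from the $\Z_2$-zero-divisors cup-length and the upper bound from $\TC(X) \leq 2\dim(X)$. Consequently, determining $\TC(N_g)$ amounts to deciding whether the upper bound is attained, and Theorem~\ref{FaberCosta} translates this into a single non-vanishing question about the canonical $\TC$ class ${\mathfrak{v}}$. Corollary~\ref{FaberCostaNg} is just the packaged version of this reduction for surfaces.

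Hence there is no substantive obstacle in the proof of the corollary itself; it is a direct translation with no auxiliary lemmas or special arguments required beyond the dimensional input $\dim(N_g) = 2$. The real difficulty, which the remainder of the paper must address, lies in verifying the right-hand side of the biconditional, namely ${\mathfrak{v}}^4 \neq 0$ for $g \geq 2$. I expect this to be tackled via an explicit cocycle calculation for the Klein bottle (using a small free resolution of $\Z$ over $\Z[\pi_1(K)]$ and an explicit diagonal approximation for the crossed homomorphism $(a,b)\mapsto a\bar{b}-1$), together with an evaluation on a cleverly chosen $4$-cycle, and then a naturality/reduction argument to propagate non-vanishing to all $g\geq 2$.
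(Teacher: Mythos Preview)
Your proposal is correct and follows essentially the same approach as the paper: both treat the corollary as the direct specialization of Theorem~\ref{FaberCosta} to $X=N_g$ with $n=\dim N_g=2$, and both frame its interest via the a priori bounds $3\le \TC(N_g)\le 4$. Your forecast of how the rest of the paper proceeds is also accurate.
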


We next use the bar resolution to give explicit expressions of ${\mathfrak{v}}_{N_g}$ and its powers.

\subsection{Bar resolution and canonical cocyle} \label{sec:bar}

Let $\pi$ be a discrete group and $X$ a $K(\pi,1)$-space. In this section we exhibit, for $n\geq 1$, a canonical cocycle representing the cohomology class ${\mathfrak v}^ n\in H^n(X\times X; I(\pi)^{\otimes n})$.
  
Recall (see, for instance, Brown \cite{Brown}) the bar resolution $B_{*}(\pi)$ of $\Z$ as a trivial $\Z[\pi]$-module:
\[
\cdots \longrightarrow B_n(\pi)\xrightarrow{\ \partial_n\ } \cdots \longrightarrow B_1(\pi) \xrightarrow{\ \partial_1\ } B_0(\pi)=\Z[\pi] \xrightarrow{\ \varepsilon\ } \Z \longrightarrow 0.
\]
Here $B_n(\pi)$ is the free $\Z[\pi]$-module with basis $\{[a_1|\cdots |a_n], (a_1,\cdots ,a_n)\in \pi^n\}$ and $\partial_n$ is the $\Z[\pi]$ morphism given by 
\begin{multline*}
\partial_n([a_1|\cdots |a_n])=a_1\cdot [a_2|\cdots |a_n]+\sum\limits_{i=1}^{n-1}(-1)^i[a_1|\cdots |a_{i-1}|a_ia_{i+1}|a_{i+2}|\cdots |a_n]\\ + (-1)^n[a_1|\cdots |a_{n-1}].
\end{multline*}
In particular, $\partial_1[a]=a[\,]-[\,]=a-1$. As a $\Z[\pi]$-chain complex, $B_*(\pi)$ is equivalent to the singular chain complex of the universal cover $\widetilde{X}$ of $X$.

If $A$ is a $\Z[\pi]$-module then $B_{*}(\pi)\otimes_{\pi}A$ (with differential $\partial \otimes\, \mathrm{id}$) is a $\Z$-chain complex that we denote by $B_{*}(\pi;A)$ and call the bar resolution with coefficients in $A$. We also consider the $\Z$-cochain complex of cochains with coefficients, $C^{*}(\pi;A):=(\Hom_{\Z [\pi]}(B_{*}(\pi),A),\delta)$, where $\delta$ is given by $\delta_n \alpha= (-1)^{n+1}\alpha \partial_{n+1}$ and $A$ is viewed as a chain complex concentrated in degree $0$. A cochain $\alpha$ of degree $n$ is completely determined by a $\Z[\pi]$-morphism $B_n(\pi)\to A$ and will be so indicated. 
In this way, the (co)homology groups of a $K(\pi,1)$-space $X$ with coefficients in $A$ are given by 
\[
H_*(X;A)=H (B_*(\pi;A)) \quad \text{and}\quad H^*(X;A)=H(C^*(\pi;A)).
\]
Recall also that these (co)homology groups may be computed using $\Z$-(co)chain-complexes defined analogously to $B_*(\pi;A)$ and $C^*(\pi;A)$ by replacing $B_*(\pi)$ with any other free $\Z[\pi]$-resolution of $\Z$. In particular we will make use of finite free resolutions in Sections \ref{fundcycle} and \ref{SectionNg}.

The Alexander-Whitney diagonal is the $\Z[\pi]$-chain map given by
\[
\begin{array}{rcl}
\Delta\colon B_*(\pi) & \to & B_*(\pi)\otimes B_*(\pi)\\
{[}a_1|\cdots |a_n] &\mapsto & \sum\limits_{i=0}^{n} [a_1|\cdots |a_i]\otimes a_1\cdots a_i[a_{i+1}|\cdots |a_n]
\end{array}
\]
where the action of $\Z[\pi]$ on $B_*(\pi)\otimes B_*(\pi)$ is induced by the diagonal action of $\pi$.

If $\alpha\colon B_i(\pi)\to A_1$ and $\beta\colon B_{n-i}(\pi)\to A_2$ are cochains of degrees $i$ and $n-i$ respectively, their cup product $\alpha \cup \beta$ is the cochain of degree $n$ given by 
\[
\alpha \cup \beta\colon B_n(\pi) \xrightarrow{\ \Delta\ } (B_*(\pi)\otimes B_*(\pi))_n \xrightarrow{\ (-1)^{i(n-i)}\alpha \otimes \beta\ } A_1\otimes A_2.
\]
We have $\delta (\alpha \cup \beta)=\delta \alpha \cup \beta + (-1)^{\deg(\alpha)}\alpha \cup \delta \beta$.

Since $B_*(\pi\times \pi)$ is, as a $\Z[\pi\times \pi]$-chain complex, equivalent to the singular chain complex of $\widetilde{X}\times \widetilde{X}$, using the proof of \cite[Lemma 5]{FarberCosta}, it is readily checked that the canonical $\TC$ cohomology class ${\mathfrak v}\in H^ 1(X\times X;I(\pi))$ is the class of the canonical cocycle of degree $1$, $\nu\colon B_1(\pi\times \pi)\to I(\pi)$, given by
\[
\nu( [(a, b)])= a\bar{b}-1
\]
for $[(a,b)]\in B_1(\pi\times \pi)$. Using the Alexander-Whitney diagonal we then obtain the following explicit expression of the $n$-th power of ${\mathfrak v}\in H^ 1(X\times X;I(\pi))$:

\begin{lem}\label{explicitpower} The $n$-th power of the canonical $\TC$ cohomology class $\mathfrak{v}$ is the class of the cocyle $\nu^n$ of degree $n$ given by
{
\setlength\arraycolsep{2pt}
\begin{eqnarray*}
\nu^n\colon B_n(\pi\times \pi) &\to & I(\pi)^{\otimes n}\\
\left[(a_1,b_1)|\cdots |(a_n, b_n)\right] & \mapsto & \xi\cdot \bigl(u_1-1\bigr)\otimes
\cdots \otimes (a_1\cdots a_{n-1})\bigl(u_n-1\bigr)(\bar{b}_{n-1}\cdots  \bar{b}_{1}),\nonumber
\end{eqnarray*}
}
where $\xi=(-1)^{n(n-1)/2}$ and $u_i=a_i\bar{b}_i$ for each $i$, $1\le i\le n$.
\end{lem}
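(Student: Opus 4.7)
The plan is to argue by induction on $n$. The base case $n=1$ is exactly the identification of $\nu$ with $\mathfrak{v}$ recorded just before the statement of the lemma: the formula reduces to $\xi\cdot(u_1-1) = a_1\bar{b}_1 - 1$, since $\xi = (-1)^{0} = 1$. For the inductive step, I would realize $\nu^n$ as the cup product $\nu^{n-1}\cup \nu$ and evaluate this cup product on a basis element $[(a_1,b_1)|\cdots|(a_n,b_n)]$ of $B_n(\pi\times\pi)$ using the Alexander--Whitney formula recalled in Section~\ref{sec:bar}.

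Concretely, applying the Alexander--Whitney diagonal to $[(a_1,b_1)|\cdots|(a_n,b_n)]$ and extracting the $(n-1,1)$-summand yields
\[
[(a_1,b_1)|\cdots|(a_{n-1},b_{n-1})]\otimes (a_1\cdots a_{n-1},b_1\cdots b_{n-1})\cdot [(a_n,b_n)].
\]
Feeding the first tensor factor to $\nu^{n-1}$ produces, by the induction hypothesis, the first $n-1$ tensor factors in the claimed formula, weighted by $\xi_{n-1}=(-1)^{(n-1)(n-2)/2}$. Since $\nu$ is a $\Z[\pi\times\pi]$-morphism, its evaluation on the second tensor factor equals $(a_1\cdots a_{n-1},b_1\cdots b_{n-1})$ acting on $\nu([(a_n,b_n)]) = u_n - 1$; unwinding the definition of the diagonal action of $\pi\times\pi$ on $I(\pi)$ from Section~\ref{sec:notation}, this is precisely $(a_1\cdots a_{n-1})(u_n - 1)(\bar{b}_{n-1}\cdots\bar{b}_1)$, matching the last tensor factor in the claimed formula.

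The only remaining item is the sign, which is the one place where care is required. The Alexander--Whitney cup product formula contributes a sign $(-1)^{(n-1)\cdot 1}$, and combined with $\xi_{n-1}$ coming from the induction hypothesis this gives
\[
(-1)^{n-1}\cdot(-1)^{(n-1)(n-2)/2}=(-1)^{(n-1)+(n-1)(n-2)/2}=(-1)^{n(n-1)/2}=\xi,
\]
which closes the induction. I expect the main (in fact only) subtlety to be this sign bookkeeping together with the correct use of the diagonal $\pi\times\pi$-action when identifying the last tensor factor; everything else is a direct unwinding of the Alexander--Whitney formula and of the formula for $\nu$ itself.
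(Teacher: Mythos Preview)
Your proof is correct and is precisely the kind of direct computation the paper has in mind: the paper gives no proof beyond the sentence ``Using the Alexander--Whitney diagonal we then obtain the following explicit expression,'' and your induction on $n$ via $\nu^n=\nu^{n-1}\cup\nu$ is exactly that unwinding, with the sign bookkeeping done correctly. One tiny quibble: the $\pi\times\pi$-action on $I(\pi)$ you invoke is recorded in Section~\ref{sec:tc class}, not Section~\ref{sec:notation}.
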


We will also use the Eilenberg-Zilber chain equivalence 
\begin{equation} \label{eq:EZ}
EZ\colon B_*(\pi)\otimes B_*(\pi) \longrightarrow B_*(\pi \times \pi),
\end{equation}
which is the $\Z[\pi\times \pi]\cong\Z[\pi]\otimes \Z[\pi]$ morphism given by
\[
\begin{array}{rcl}
EZ_n\colon \bigoplus\limits_{i=0}^{n}B_i(\pi)\otimes B_{n-i}(\pi)& \to & B_n(\pi\times \pi) \\
{[}a_1|\cdots |a_i]\otimes [b_{i+1}|\cdots |b_n] &\mapsto & \sum\limits_{\sigma \in {\mathcal S}_{i,n-i}}\mathrm{sgn}(\sigma)[c_{\sigma^{-1}(1)}| \cdots | c_{\sigma^{-1}(n)}]
\end{array}
\]
where ${\mathcal S}_{i,n-i}$ denotes the set of $(i,n-i)$ shuffles, $\mathrm{sgn}(\sigma)$ is the signature of the shuffle $\sigma$, and
\[
c_{k}=\begin{cases}
(a_{k}, 1) & \mbox{ if } 1\leq k \leq i, \\
(1, b_{k}) & \mbox{ if } i+1\leq k \leq n.
\end{cases}
\]

\subsection{Corollary \ref{FaberCostaNg} revisited}
Let $X=N_g$ and $\pi=\pi_1(N_g)$. In order to see that ${\mathfrak{v}}^{4}$ is not zero, it is sufficient to see that the evaluation of the cocycle
\[
\nu^4\colon B_4(\pi\times \pi) \longrightarrow I(\pi)^{\otimes 4}
\]
on a homologically nontrivial cycle is nonzero. A natural choice for such a homologically nontrivial cycle is a cycle corresponding to the twisted fundamental class of $N_g\times N_g$. Recall that the twisted fundamental class of $N_g\times N_g$, denoted by $[N_g\times N_g]$, is a generator of $H_4(N_g\times N_g, \widetilde{\Z})\cong \Z$ where $\widetilde{\Z}$ denotes the orientation module of $N_g\times N_g$, that is, $\widetilde{\Z}$ is the free abelian group $\Z$ given with the structure of $\Z[\pi\times \pi]$-module induced by the first Stiefel-Whitney class $w_1(N_g\times N_g)\in H^1(N_g\times N_g;\Z_2)$. 

Letting  $t$ denote a generator of $\widetilde{\Z}$, the class $[N_g\times N_g]$ can be represented by a cycle
\[
\Omega_{\widetilde{\Z}}=\Omega \otimes_{\pi\times \pi} t \in B_4(\pi\times \pi;\widetilde{\Z}),
\]
where $\Omega \in B_4(\pi\times \pi)$.

The evaluation of $\nu^4$ on $\Omega_{\widetilde{\Z}}$, which we denote by $\nu^4(\Omega_{\widetilde{\Z}})$, is the image of $\Omega_{\widetilde{\Z}}$ under the following morphism:
\[
B_{4}(\pi\times \pi)\otimes_{\pi\times \pi}\widetilde{\Z} \xrightarrow{\ \nu^4\otimes\, \mathrm{id}\ } I(\pi)^4\otimes_{\pi\times \pi}\widetilde{\Z},
\]
that is, $\nu^4(\Omega_{\widetilde{\Z}})=\nu^4(\Omega)\otimes_{\pi\times \pi}t$. Notice that   $\nu^4(\Omega_{\widetilde{\Z}})$ is exactly the cap-product of ${\mathfrak v}^4$ with $[N_g\times N_g]$:
\[
{\mathfrak v}^4 \cap [N_g\times N_g]\in H_0(N_g\times N_g; I(\pi)^{\otimes 4}\otimes \widetilde{\Z})=I(\pi)^{\otimes 4}\otimes_{\pi\times \pi}\widetilde{\Z}.
\]
By Poincar\'e duality we have an isomorphism
\[
\cap [N_g \times N_g]\colon H^4(N_g\times N_g; I(\pi)^{\otimes 4}) \xrightarrow{\ \cong\ } H_0(N_g\times N_g; I(\pi)^{\otimes 4}\otimes \widetilde{\Z}).
\]
Consequently, Corollary \ref{FaberCostaNg} can be continued as follows:

\begin{prop} \label{FaberCostaNgRevisited}
 $\TC(N_g)= 4 \Longleftrightarrow \nu^4(\Omega_{\widetilde{\Z}})\neq 0\ \text{in}\ I(\pi)^{\otimes 4}\otimes_{\pi\times \pi} \widetilde{\Z}$.
\end{prop}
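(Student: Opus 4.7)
My plan is to combine Corollary \ref{FaberCostaNg} with the twisted Poincar\'e duality isomorphism displayed just above the statement. By the corollary, $\TC(N_g) = 4$ holds if and only if $\mathfrak{v}^4 \neq 0$ in $H^4(N_g \times N_g; I(\pi)^{\otimes 4})$. Since $\cap [N_g \times N_g]$ is an isomorphism onto $H_0(N_g \times N_g; I(\pi)^{\otimes 4} \otimes \widetilde{\Z}) = I(\pi)^{\otimes 4} \otimes_{\pi \times \pi} \widetilde{\Z}$, non-vanishing of $\mathfrak{v}^4$ is equivalent to non-vanishing of its image $\mathfrak{v}^4 \cap [N_g \times N_g]$ in the latter group.

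The remaining task is to identify this image with $\nu^4(\Omega_{\widetilde{\Z}})$. I would appeal to the standard chain-level formula for the cap product with twisted coefficients: given a cocycle $\alpha \colon B_n(\pi \times \pi) \to A$ and a chain of the form $\sigma \otimes_{\pi \times \pi} m$ in $B_n(\pi \times \pi) \otimes_{\pi \times \pi} M$, the cap product is represented by $\alpha(\sigma) \otimes_{\pi \times \pi} m \in A \otimes_{\pi \times \pi} M$. Specializing to $\alpha = \nu^4$ (as provided by Lemma \ref{explicitpower}), $\sigma = \Omega$ and $m = t$ yields exactly $\nu^4(\Omega) \otimes_{\pi \times \pi} t = \nu^4(\Omega_{\widetilde{\Z}})$, as required.

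The only subtle point in executing this plan is that both the cohomology $H^4(N_g \times N_g; I(\pi)^{\otimes 4})$ and the homology $H_0(N_g \times N_g; I(\pi)^{\otimes 4} \otimes \widetilde{\Z})$ are to be computed through the bar resolution of $\pi \times \pi$, which is legitimate because $N_g$ is aspherical for $g \geq 1$ and hence $N_g \times N_g$ is a $K(\pi \times \pi, 1)$; the chain-level cap-product formula then agrees with the topological one up to the canonical chain equivalence between the bar resolution and the singular chain complex of the universal cover. Once this identification is in place, the stated equivalence is immediate.
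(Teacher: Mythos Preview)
Your argument is correct and follows exactly the route taken in the paper: invoke Corollary~\ref{FaberCostaNg}, use the twisted Poincar\'e duality isomorphism $\cap [N_g\times N_g]$, and identify $\mathfrak{v}^4\cap[N_g\times N_g]$ with $\nu^4(\Omega_{\widetilde{\Z}})$ via the chain-level cap-product formula. The paper presents this reasoning in the paragraph immediately preceding the proposition rather than as a formal proof, but the content is identical to what you wrote.
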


Actually we will see that, for $g\geq 2$, $\nu^4({\Omega_{\Z_2}})\neq 0$ where $\Z_2$ is equipped with the trivial $\pi\times \pi$ action and ${\Omega}_{\Z_2}\in B_4(\pi\times \pi)\otimes_{\pi\times \pi}\Z_2=B_4(\pi\times \pi;\Z_2)$ is a cycle representing the generator of $H_4(N_g\times N_g;\Z_2)$, that is, the $\Z_2$ fundamental class of $N_g\times N_g$. The following commutative diagram, in which the vertical maps are induced by the obvious projection $\widetilde{\Z}\to \Z_2$,
\[
\xymatrix{
B_{4}(\pi\times \pi;\widetilde{\Z})=B_{4}(\pi\times \pi)\otimes_{\pi\times \pi}\widetilde{\Z}\ar[d] \ar[rr]^-{\nu^4\otimes\, \mathrm{id}}&& I(\pi)^{\otimes 4}\otimes_{\pi\times \pi}\widetilde{\Z}\ar[d]\\
B_{4}(\pi\times \pi;\Z_2)=B_{4}(\pi\times \pi)\otimes_{\pi\times \pi}{\Z_2} \ar[rr]^-{\nu^4\otimes\, \mathrm{id}}&& I(\pi)^{\otimes 4}\otimes_{\pi\times \pi}{\Z_2}
}
\]
ensures that $\nu^4(\Omega_{\widetilde{\Z}})\neq 0$ as soon as $\nu^4({\Omega_{\Z_2}})\neq 0$, since $\Omega_{\Z_2}$ is the image of $\Omega_{\widetilde{\Z}}$ under the left-hand vertical map. So, Theorems \ref{TC(K)} and \ref{TC(Ng)} will follow from the following statement, together with the results of Sections \ref{SectionKlein} and \ref{SectionNg}, on the Klein bottle and higher genus surfaces, respectively.

\begin{prop}\label{Z2coeff} If $\nu^4({\Omega_{\Z_2}})\neq 0\ \text{in}\ I(\pi)^{\otimes 4}\otimes_{\pi\times \pi}{\Z_2}$, then $\TC(N_g)=4$.
\end{prop}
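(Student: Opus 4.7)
The plan is to derive Proposition \ref{Z2coeff} as an immediate consequence of Proposition \ref{FaberCostaNgRevisited} combined with the commutative diagram of coefficient-reduction maps displayed just before the statement. Since the proposition is really a bookkeeping step that transfers the computation from the twisted orientation coefficients to the easier mod-$2$ coefficients, the strategy amounts to a short diagram chase.

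First, I would verify that the left vertical map in the diagram indeed sends $\Omega_{\widetilde{\Z}}$ to $\Omega_{\Z_2}$. This relies on the fact that the reduction mod $2$ map $\widetilde{\Z}\to\Z_2$, $t\mapsto 1$, is a morphism of $\Z[\pi\times\pi]$-modules (the $\pi\times\pi$ action on $\widetilde{\Z}$ factors through $\pm 1$ and the nontrivial sign is killed mod $2$), together with the classical fact that, for any closed connected manifold $M$ of dimension $n$, the $\Z_2$ fundamental class is the image of the twisted fundamental class under the coefficient-reduction homomorphism $H_n(M;\widetilde{\Z})\to H_n(M;\Z_2)$. Applied to $M=N_g\times N_g$, this identifies the image of $\Omega_{\widetilde{\Z}}=\Omega\otimes_{\pi\times\pi}t$ with $\Omega\otimes_{\pi\times\pi}1=\Omega_{\Z_2}$, as required.

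Once that identification is in place, commutativity of the square forces the image of $\nu^4(\Omega_{\widetilde{\Z}})$ under the right vertical map $I(\pi)^{\otimes 4}\otimes_{\pi\times\pi}\widetilde{\Z}\to I(\pi)^{\otimes 4}\otimes_{\pi\times\pi}\Z_2$ to coincide with $\nu^4(\Omega_{\Z_2})$. If we assume $\nu^4(\Omega_{\Z_2})\neq 0$, then $\nu^4(\Omega_{\widetilde{\Z}})$ cannot vanish, and Proposition \ref{FaberCostaNgRevisited} immediately gives $\TC(N_g)=4$.

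The main obstacle, candidly, does not live inside the proof of Proposition \ref{Z2coeff} at all. The genuine difficulty is verifying the hypothesis $\nu^4(\Omega_{\Z_2})\neq 0$, which requires choosing a concrete free $\Z[\pi\times\pi]$-resolution of $\Z$, exhibiting an explicit cycle $\Omega_{\Z_2}$ representing the mod-$2$ fundamental class of $N_g\times N_g$ in that resolution, and then evaluating the cocycle $\nu^4$ of Lemma \ref{explicitpower} on it — the computation carried out in Sections \ref{SectionKlein} and \ref{SectionNg}. Within the proof of the present proposition itself, the only nontrivial point is the identification of $\Omega_{\Z_2}$ as the mod-$2$ reduction of $\Omega_{\widetilde{\Z}}$, and that is standard.
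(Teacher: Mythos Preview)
Your proposal is correct and follows exactly the same route as the paper: use the commutative square induced by the coefficient reduction $\widetilde{\Z}\to\Z_2$ to lift the nonvanishing of $\nu^4(\Omega_{\Z_2})$ to that of $\nu^4(\Omega_{\widetilde{\Z}})$, then invoke Proposition~\ref{FaberCostaNgRevisited}. The paper's argument is slightly terser (it simply asserts that $\Omega_{\Z_2}$ is the image of $\Omega_{\widetilde{\Z}}$ under the left vertical map), while you spell out why this identification holds; otherwise the arguments are identical.
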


\section{Klein bottle}\label{SectionKlein}
We now consider the special case of the Klein bottle:
\begin{equation} \label{eq:Kpres}
K=N_2, \text{ a }K(G,1)\text{-space,} \quad \mbox{where} \quad G=\langle x,y \,|\, yxy=x\rangle.
\end{equation} 
Note that $G=\Z\rtimes\Z$, where the action of $\Z=\langle x\rangle$ on $\Z=\langle y\rangle$ is given by $x^{-1}yx=y^{-1}$.
Theorem \ref{TC(K)} will follow from Proposition \ref{Z2coeff} together with the following result we will establish in this section:
\begin{prop} \label{propKlein} $\nu^4({\Omega_{\Z_2}})\neq 0\ \text{in}\ I(G)^{\otimes 4}\otimes_{G\times G}{\Z_2}$.
\end{prop}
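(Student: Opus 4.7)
The plan is to make explicit a bar $4$-cycle $\Omega\in B_4(G\times G)$ representing the $\Z_2$ fundamental class of $K\times K$, apply the cocycle formula of Lemma~\ref{explicitpower} to get a concrete element $\nu^4(\Omega)\in I(G)^{\otimes 4}$, and then detect the nonvanishing of its image in the coinvariant module $I(G)^{\otimes 4}\otimes_{G\times G}\Z_2$ by means of a suitable $\Z[G\times G]$-equivariant quotient map.

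For the construction of $\Omega$, I would start from the small $\Z[G]$-free resolution of $\Z$ coming from the standard CW structure of $K$ (one $0$-cell, two $1$-cells $x,y$, one $2$-cell attached via $yxy\bar x$),
\[
0\to \Z[G]\xrightarrow{\ d_2\ }\Z[G]^{\oplus 2}\xrightarrow{\ d_1\ }\Z[G]\xrightarrow{\ \varepsilon\ }\Z\to 0,
\]
in which, via Fox calculus, $d_1=(x-1,\,y-1)$ and $d_2=(y-1,\,1+yx)$. A direct check using $yxy=x$ shows that the $\Z[G]$-chain map $\phi\colon P_*\to B_*(G)$ sending $e_x\mapsto [x]$, $e_y\mapsto[y]$, and $F\mapsto [y|x]+[yx|y]$ is well defined. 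The tensor product $P_*\otimes P_*$ is a $\Z[G\times G]$-free resolution of $\Z$ for $K\times K$, and composing $\phi\otimes\phi$ with the Eilenberg--Zilber map~\eqref{eq:EZ} lifts the canonical generator of its top module to
\[
\Omega=EZ_4\bigl(([y|x]+[yx|y])\otimes([y|x]+[yx|y])\bigr)\in B_4(G\times G),
\]
a signed sum of $4\cdot 6=24$ bar $4$-tuples whose image $\Omega_{\Z_2}$ represents the mod-$2$ fundamental class of $K\times K$. Applying Lemma~\ref{explicitpower} term by term then yields a closed-form expression of $\nu^4(\Omega)\in I(G)^{\otimes 4}$ as a signed sum of $24$ pure tensors, each with entries of the form $g(u_i-1)\bar h$, where $g,h$ are products of $x,y,yx$ and $u_i\in\{x,y,yx,\bar x,\bar y,\overline{yx}\}$.

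The main obstacle is the third step, the nonvanishing check. A natural first attempt is to project each tensor factor via $I(G)\twoheadrightarrow I(G)/I(G)^2\cong G^{\mathrm{ab}}=\Z\oplus\Z_2$, a $\Z[G\times G]$-equivariant surjection onto a trivial-action module, giving a candidate map $I(G)^{\otimes 4}\otimes_{G\times G}\Z_2\to(G^{\mathrm{ab}}\otimes\Z_2)^{\otimes 4}\cong\Z_2^{16}$. A bookkeeping of the $24$ shuffle contributions shows, however, that the image of $\nu^4(\Omega)$ under this abelianization projection vanishes mod~$2$, as the shuffle terms cancel in pairs. To extract a nonzero invariant one must therefore go beyond the linear part, for instance by working in $I(G)/I(G)^3$, by applying Fox derivations $\partial/\partial x,\partial/\partial y$ factor by factor while tracking the diagonal $G\times G$-action, or by passing to a metacyclic quotient of $G$ in which the relation $yxy=x$ persists with nontrivial second-order content. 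After such a refinement, the verification that $\nu^4(\Omega_{\Z_2})\neq 0$ reduces to a bounded, explicit linear-algebra calculation over $\Z_2$, which I expect to succeed and complete the proof of Proposition~\ref{propKlein}.
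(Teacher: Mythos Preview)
Your setup through the explicit $24$-term element $\nu^4(\Omega)$ matches the paper exactly, and you are right that the abelianization $I(G)\to I(G)/I(G)^2$ kills the class. The gap is in the third step: you do not actually carry out a detection, and the refinements you list (``work in $I(G)/I(G)^3$'', ``Fox derivations factor by factor'', ``pass to a metacyclic quotient'') are too vague to count as a proof. The whole content of the proposition lies in this computation, and ``I expect to succeed'' is not a substitute for it. In particular, a symmetric truncation like $I(G)/I(G)^3$ on all four factors still leaves a large coinvariant module and a $24$-term sum with many hidden cancellations; one needs further structural tricks to make the calculation finite and transparent.

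The paper's detection uses two ideas you do not mention. First, it treats the four tensor slots \emph{asymmetrically}: the first factor is projected via $G\twoheadrightarrow Y=\langle y\mid y^2\rangle$ (so $I(Y;\Z_2)\cong\Z_2$ and the first slot disappears), while the remaining three factors are projected via $G\twoheadrightarrow D=\langle x,y\mid yxy=x,\,x^2\rangle$, the infinite dihedral quotient. Second, on those three factors it passes to the \emph{exterior cube} $\bigwedge^3 I(D;\Z_2)$; this antisymmetrization annihilates all but a handful of the $24$ terms, and the survivor is the single element $s=(x-1)\wedge(yx-1)\wedge(y-\bar y)$. Only then does a truncation of the type you suggest enter: with $w=y-1$ one passes to $J=I(D;\Z_2)/(w^3)$, a five-dimensional $\Z_2$-space with explicit $D\times D$-action, and checks by a short table that the image of $s$ in $\bigwedge^3 J/\bigwedge^3 L$ (a six-dimensional quotient) is nonzero in coinvariants. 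Without the asymmetric projection and the exterior power, the ``bounded linear-algebra calculation'' you anticipate is not obviously bounded at all, and your proposal as written does not establish the proposition.
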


Since the action of $G\times G$ on $\Z_2$ is trivial, $I(G)\otimes \Z_2$ can be identified with the $\Z[G\times G]$-module $I(G;\Z_2)=\ker(\varepsilon\colon \Z_2[G]\to \Z_2)$, and we have
$$ I(G)^{\otimes 4}\otimes_{G\times G}{\Z_2}=({I}(G;\Z_2)^{\otimes 4})_{G\times G}.$$
We will first determine an explicit expression of a cycle ${\Omega}_{\Z_2} \in {B}_4(G\times G;\Z_2)$ representing the generator of $H_4(K\times K;\Z_2)$, then calculate $\nu^4({\Omega}_{\Z_2})$ and prove that $\nu^4({\Omega_{\Z_2}})$ is nonzero in $({I}(G;\Z_2)^{\otimes 4})_{G\times G}$.

\subsection{Explicit fundamental cycle}\label{fundcycle}
One can construct a finite, free resolution of $\Z$ as $\Z[G]$-module by applying the Fox free differential calculus to the presentation 
of the Klein bottle group $G$ given above. 
References for the Fox calculus include Fox's original papers \cite{Fox} and Birman \cite[\S 3.1]{Birman}, and additional exposition may be found in \cite{CohenSuciu}. 
Explicitly, using the presentation \eqref{eq:Kpres} and the fact that $G$ has cohomological dimension $2$, a finite, free $\Z[G]$-resolution of $\Z$ is given by:
\begin{equation} \label{eq:res}
\xymatrix{
P_2\ar[r]^{\partial} & P_1 \ar[r]^{\partial} & P_0 \ar[r]^{\varepsilon}&\Z
}
\end{equation}
where $P_0,P_1,P_2$ are free $\Z[G]$-modules with respective bases $\{e^0\}$, $\{e^1_1,e^1_2\}$, $\{e^2\}$ and the differential is given by
\[
\partial(e^1_1)=(x-1)e^0,\quad\partial(e^1_2)=(y-1)e^0,\quad \partial(e^2)= (y-1)e^1_1+(1+yx)e^1_2.
\]

A chain map, which is necessarily a chain equivalence (see, e.g., \cite[\S I.7]{Brown}), from the resolution $(P_{\bullet},\partial)$ to the bar resolution $B_{*}(G)$ is given by 
\[
e^0\mapsto 1,\quad  e^1_1\mapsto [x],\quad  e^1_2\mapsto [y], \quad e^2 \mapsto [y|x]+[yx|y].
\]
By tensoring with $\Z_2$ over $G$, $e^2$ gives a cycle whose homology class is the generator of $H_ 2(K;\Z_2)$. Therefore,
\[
\kappa=[y|x]+[yx|y]\in B_2(G)
\]
induces a representative, ${\kappa}_{\scriptstyle{\Z_2}}$, of this $\Z_2$ fundamental class in $B_2(G;\Z_2)$. 

\begin{rem} The orientation module of the Klein bottle $K$, $\widetilde {\Z}=\Z t$, has $\Z[G]$-module structure generated by:
\[
x^p \cdot t=(-1)^pt \qquad y^p \cdot t=t \qquad (p\in \Z).
\]
The elements $e_2\otimes_{G} t\in P_{2}\otimes_G \widetilde{\Z}$ and $\kappa {\otimes}_{G}t=\kappa_{\widetilde{\Z}}\in B_2(G,\widetilde{\Z})$ are cycles representing the twisted fundamental class $[K]\in H_2(K;\widetilde{\Z})=\Z$ of $K$.
\end{rem}

A cycle, $\Omega_{\Z_2} \in B_4(G\times G;\Z_2)$ representing the generator of $H_4(K\times K; \Z_2)$ is then induced by $\Omega=EZ(\kappa\otimes \kappa)\in B_4(G\times G)$ where $EZ$ is the Eilenberg-Zilber map \eqref{eq:EZ}, which is a chain equivalence. Explicitly:
\begin{equation} \label{eq:Omega}
\arraycolsep=1.4pt\def\arraystretch{1.2}
\Omega=\left\{\begin{array}{c}
+[y_1|x_1|y_2|x_2]-[y_1|y_2|x_1|x_2]+[y_1|y_2|x_2|x_1]\\
+[y_2|y_1|x_1|x_2]-[y_2|y_1|x_2|x_1]+[y_2|x_2|y_1|x_1]\\
+[y_1|x_1|y_2x_2|y_2]-[y_1|y_2x_2|x_1|y_2]+[y_1|y_2x_2|y_2|x_1]\\
+[y_2x_2|y_1|x_1|y_2]-[y_2x_2|y_1|y_2|x_1]+[y_2x_2|y_2|y_1|x_1]\\
+[y_1x_1|y_1|y_2|x_2]-[y_1x_1|y_2|y_1|x_2]+[y_1x_1|y_2|x_2|y_1]\\
+[y_2|y_1x_1|y_1|x_2]-[y_2|y_1x_1|x_2|y_1]+[y_2|x_2|y_1x_1|y_1]\\
+[y_1x_1|y_1|y_2x_2|y_2]-[y_1x_1|y_2x_2|y_1|y_2]+[y_1x_1|y_2x_2|y_2|y_1]\\
+[y_2x_2|y_1x_1|y_1|y_2]-[y_2x_2|y_1x_1|y_2|y_1]+[y_2x_2|y_2|y_1x_1|y_1]
\end{array}\right.
\end{equation}
where $x_1=(x,1)$, $x_2=(1,x)$, $y_1=(y,1)$, $y_2=(1,y)$. Notice that, with this notation
$u_1v_2=v_2u_1$ for $u,v\in\{x,y\}$.

\begin{rem} The expression \eqref{eq:Omega} also provides a cycle $\Omega_{\widetilde{\Z}}\in B_4(G\times G; \widetilde{\Z})$ which represents the twisted fundamental class of $K\times K$. Here, the structure of the orientation module of $K\times K$ is given by 
\[
x_1^p \cdot t=x_2^p\cdot t=(-1)^pt, \qquad y_1^p \cdot t=y_2^p \cdot t=t, \qquad (p\in \Z).
\]
Alternatively, one can
use the product structure of $G\times G$ (or express $G\times G$ as an iterated semidirect product of rank $1$ free groups and use  \cite{CohenSuciu}) to 
exhibit a finite, free $\Z[G\times G]$-resolution $F_*$ of $\Z$. For instance, one can take $F_*=P_*\otimes P_*$, where $P_*$ is the $\Z[G]$-resolution \eqref{eq:res}, see \cite[Ch.\,V]{Brown}. Then, constructing a chain map $F_* \to B_*(G\times G)$, 
the image of the unique degree $4$ generator of $F_*$ yields an explicit expression of a representative of the twisted fundamental class of $K\times K$ as above. 
\end{rem}

\subsection{Explicit expression of $\nu^ 4(\Omega)$.} 

By applying the cocyle given in Lemma \ref{explicitpower} to $\Omega$, we obtain the following expression of $\nu^4(\Omega)\in I(G)^{\otimes 4}$. Here we only use the relations coming from the relation $yxy=x$ (e.g., $yx=x\bar{y}$,  $y\bar{x}=\bar{x}\bar{y}$, etc.).  For convenience in future calculations, we label each term of this expression by $(T_i)$, as indicated in the righthand column.
\[
\arraycolsep=5pt\def\arraystretch{1.1}
\nu^4(\Omega)=
{\scriptstyle{
\left\{\begin{array}{lr}
+(y-1)\otimes (yx-y) \otimes (y^2x-yx) \otimes (1-y^2x) &  (T_1)\\
-(y-1)\otimes (1-y) \otimes (y^2x-1) \otimes (1-y^2x) & (T_2)\\
+(y-1)\otimes (1-y) \otimes (y^2\bar{x}-1) \otimes (1-y^2\bar{x}) & (T_3)\\
+(\bar{y}-1)\otimes (1-\bar{y}) \otimes (y^2x-1) \otimes (1-y^2x) & (T_4)\\
-(\bar{y}-1)\otimes (1-\bar{y}) \otimes (y^2\bar{x}-1) \otimes (1-y^2\bar{x}) & (T_5)\\
+(\bar{y}-1)\otimes (y\bar{x}-\bar{y}) \otimes (y^2\bar{x}-y\bar{x}) \otimes (1-y^2\bar{x}) & (T_6)\\
+(y-1)\otimes (yx-y) \otimes (1-yx) \otimes (y-1) & (T_7)\\
-(y-1)\otimes (y^2\bar{x}-y) \otimes (1-y^2\bar{x}) \otimes (y-1) & (T_8)\\
+(y-1)\otimes (y^2\bar{x}-y) \otimes (y\bar{x}-y^2\bar{x}) \otimes (y-y\bar{x}) & (T_9)\\
+(y\bar{x}-1)\otimes (y^2\bar{x}-y\bar{x}) \otimes (1-y^2\bar{x}) \otimes (y-1) & (T_{10})\\
-(y\bar{x}-1)\otimes (y^2\bar{x}-y\bar{x}) \otimes (y\bar{x}-y^2\bar{x}) \otimes (y-y\bar{x})  & (T_{11})\\
+(y\bar{x}-1)\otimes (\bar{x}-y\bar{x}) \otimes (y\bar{x}-\bar{x}) \otimes (y-y\bar{x})  & (T_{12})\\
+(yx-1)\otimes (x-yx) \otimes (yx-x) \otimes (\bar{y}-yx)  & (T_{13})\\
-(yx-1)\otimes (y^2x-yx) \otimes (yx-y^2x) \otimes (\bar{y}-yx)  & (T_{14})\\
+(yx-1)\otimes (y^2x-yx) \otimes (1-y^2x) \otimes (\bar{y}-1)  & (T_{15})\\
+(\bar{y}-1)\otimes (y^2x-\bar{y}) \otimes (yx-y^2x) \otimes (\bar{y}-yx)  & (T_{16})\\
-(\bar{y}-1)\otimes (y^2x-\bar{y}) \otimes (1-y^2x) \otimes (\bar{y}-1)  & (T_{17})\\
+(\bar{y}-1)\otimes (y\bar{x}-\bar{y}) \otimes (1-y\bar{x}) \otimes (\bar{y}-1) & (T_{18}) \\
+(yx-1)\otimes (x-yx) \otimes (\bar{y}-x) \otimes (1-\bar{y})  & (T_{19})\\
-(yx-1)\otimes (1-yx) \otimes (\bar{y}-1) \otimes (1-\bar{y})  & (T_{20})\\
+(yx-1)\otimes (1-yx) \otimes ({y}-1) \otimes (1-{y})  & (T_{21})\\
+(y\bar{x}-1)\otimes (1-y\bar{x}) \otimes (\bar{y}-1) \otimes (1-\bar{y})  & (T_{22})\\
-(y\bar{x}-1)\otimes (1-y\bar{x}) \otimes({y}-1) \otimes (1-{y}) & (T_{23}) \\
+(y\bar{x}-1)\otimes (\bar{x}-y\bar{x}) \otimes({y}-\bar{x}) \otimes (1-{y}) & (T_{24}) \\
\end{array}\right.
}}
\]

\subsection{Proof of Proposition \ref{propKlein}} 
Recall that $\nu^ 4(\Omega_{\Z_2})$ is the image of $\nu^4(\Omega)$ in the coinvariants of $I(G;\Z_2)^{\otimes 4}$ with respect to the $G\times G$ action. Considering the image of $\nu^4(\Omega)$ in $I(G;\Z_2)^{\otimes 4}$ permits us to forget the signs in the expression above. 

We will carry out further projections which will enable us to see that this element is not zero. The major reduction comes from a projection onto the third exterior power $\bigwedge^3 I(D;\Z_2)$ where $D=\langle x,y\mid yxy=x,x^2=1\rangle$ is the infinite dihedral group.

Recall that $k$-th exterior power $\bigwedge^ k M$ ($k\geq 1$) of a $\Z$-module $M$ is given by $\bigwedge^ k M=M^{\otimes k}/R_k$ where $R_1=\{0\}$ and, for $k\geq 2$, $R_k$ is the submodule of $M^{\otimes k}$ spanned by all $m_1\otimes \cdots \otimes m_k$ with $m_i=m_j$ for some $i\neq j$. The class of  $m_1\otimes \cdots \otimes m_k$ in $\bigwedge^ k M$ is denoted by $m_1\wedge \cdots \wedge m_k$. If $M$ is a module over a group ring $\Z[\pi]$, then $\bigwedge^ k M$ is also a $\Z[\pi]$-module since $R_k$ is a sub-$\Z[\pi]$-module of $M^{\otimes k}$ under the diagonal action of $\pi$.

We also observe that, if $\rho\colon G \to H$ is a homomorphism, then we have a commutative diagram 
\[
\xymatrix{
\Z[G\times G]\otimes I(G;\Z_2) \ar[rr]^ {\ \Z[\rho\times \rho] \otimes I(\rho)\,\  } \ar[d] && \Z[H\times H]\otimes I(H;\Z_2) \ar[d]\\
I(G;\Z_2) \ar[rr]^ {I(\rho)}&& I(H;\Z_2)}
\]
which makes compatible the $\Z[G\times G]$-module and $\Z[H\times H]$-module structures.

\subsubsection{Projection onto $\bigwedge^3 I(D;\Z_2)$}

The projection 
\[
p\colon I(G;\Z_2)^{\otimes 4} \longrightarrow \hbox{\text{$\bigwedge^3$}} I(D;\Z_2)
\]
we will use is the composition of the following two projections:

\begin{itemize}
\item[(i)] $I(G;\Z_2)^{\otimes 4}= I(G;\Z_2) \otimes I(G;\Z_2)^{\otimes 3} \longrightarrow I(Y;\Z_2) \otimes I(D;\Z_2)^{\otimes 3}$\\[3pt]
Here $Y=\langle x,y|yxy=x,x=1\rangle=\langle y|y^2=1\rangle$ and the projections on the factors are induced by the homomorphisms $G\to Y$ and $G\to D$. Since $I(Y;\Z_2)\cong \Z_2(y-1)\cong \Z_2$, we suppress the first component in the continuation of the calculation.

\medskip

\item[(ii)] $I(Y;\Z_2)\otimes (I(D;\Z^2))^{\otimes 3}\cong (I(D;\Z^2))^{\otimes 3}\longrightarrow \hbox{\text{$\bigwedge^3$}}I(D;\Z_2)$
\end{itemize}

Through manipulations in $\bigwedge^3I(D;\Z_2)$, together with the fact that we are working with $\Z_2$ coefficients, we can see that 

\begin{itemize}
\item $p(T_i)=0$ for all $T_i$ except for $T_1$, $T_6$, $T_{10}$, $T_{15}$, $T_{19}$ and $T_{24}$;
\smallskip
\item $p(T_1+T_{10})=p(T_6+T_{15})=0$;
\smallskip
\item $p(T_{19}+T_{24})=(x-1)\wedge (yx-1)\wedge (y-\bar{y})$. 
\end{itemize}
We thus obtain that the image of $\nu^4(\Omega)$ under the projection $p$ is the element
\[
s=(x-1)\wedge (yx-1)\wedge (y-\bar{y})\in \hbox{\text{$\bigwedge^3$}} I(D;\Z_2).
\]
As a consequence, $\nu^4(\Omega_{\Z_2})$ is not trivial if the class of $s$ is not trivial in the coinvariants $(\bigwedge^3 I(D;\Z_2))_{G\times G}$.
We note that since the homomorphism $G\times G\to D\times D$ is surjective and its kernel $\langle x^2\rangle\times \langle x^2\rangle$ acts trivially on $\hbox{\text{$\bigwedge^3$}} I(D;\Z_2)$, we actually have 
\[
(\hbox{\text{$\bigwedge^3$}} I(D;\Z_2))_{G\times G}=(\hbox{\text{$\bigwedge^3$}} I(D;\Z_2))_{D\times D}
\]
and it suffices to show that $s$ is not trivial modulo the $D\times D$ action.

\begin{rem} Above, we have attempted to present this projection as efficiently as possible. However, it might be worth noting that it was developed in another order. More precisely, after noting that the image of $\nu^4(\Omega_{\widetilde{\Z}})$ under the projection  
\[
I(G)^{\otimes 4}\otimes_{G\times G}\widetilde{\Z} \longrightarrow 
\hbox{\text{$\bigwedge^4$}}I(G)\otimes_{G\times G}\widetilde{\Z}
\]
was trivial, we noticed that its image in $(I(G)\otimes \bigwedge^3I(G))\otimes_{G\times G}\widetilde{\Z}$ was apparently nontrivial, and that we were not losing this information by further reducing to the infinite dihedral group and $\Z_2$ coefficients.
\end{rem}

\subsubsection{Class of $s$ modulo the $D\times D$ action}
We first note that 
\[
D=\{y^n,y^nx\mid n\in \Z\},
\] 
and that, consequently, a basis of the $\Z_2$ vector space $I(D;\Z_2)$ is given by the elements
\[
y^m-1, \,\,y^nx-1, \quad m\in \Z\setminus\{0\}, n\in \Z.
\]
Let $w=y-1$, $v=x-1$ and let $(w^ 3)$ be the sub-$\Z[D\times D]$-module of $I(D;\Z_2)$ (and of $\Z_2[D]$) generated by $w^3$. The quotient $J=I(D;\Z_2)/(w^ 3)$ is a sub-$\Z[D\times D]$-module of  $\Z_2[D]/(w^ 3)$ which can be described as
the $\Z_2$ vector space with basis
\[
v,w,wx,w^2,w^2x
\]
and $D\times D$ action given by the following table:
\[
\arraycolsep=1.4pt\def\arraystretch{1.2}
\begin{array}{c||c|c|c|c|c|}
& v&w&wx&w^2&w^2x\\
\hline
\hline
(x,1) & v&\,wx+w^2x\,&w+w^2&\, w^2x\, &w^2\\
\hline
(1,x) & v&wx&w&w^2x&w^2\\
\hline
(y,1) & v+w+wx&w+w^2&wx+w^2x&w^2&w^2x\\
\hline
(1,y) & v+w+w^2+wx&w+w^2&wx+w^2x&w^2&w^2x\\
\hline
(\bar{y},1) & \, v+w+w^2+wx+w^2x\, &\, w+w^2\,&\, wx+w^2x\, &\, w^2\, &\, w^2x\, \\
\hline
(1,\bar{y}) & v+w+wx+w^2x&w+w^2&wx+w^2x&w^2&w^2x\\
\hline
\end{array}
\]
This description follows from the fact that $y^nx-1=(y^n-1)x+(x-1)$ and that, in $J$, we have
\[
y^n-1=\left\{
\begin{array}{cl}
0, &\quad \mbox{if } n\equiv 0 \,\,{\rm mod} \,4,\\
w, &\quad \mbox{if } n\equiv 1 \,\,{\rm mod} \,4,\\
w^2, &\quad  \mbox{if } n\equiv 2 \,\,{\rm mod} \,4,\\
w+w^2, &\quad  \mbox{if } n\equiv 3 \,\,{\rm mod} \,4.\\
\end{array}
\right.
\]
Using the projection $\bigwedge^3 I(D;\Z_2)\to \bigwedge^3J$, we look at our element $s$ in $\bigwedge^3J$. We obtain
\[
\arraycolsep=1.4pt\def\arraystretch{1.2}
s=v\wedge wx\wedge w^2.
\]

Let $L={\rm span}_{\Z_2}\{w,wx,w^2,w^2x\}$. This is a sub-$\Z[D\times D]$-module of $J$ and we can consider the projection
\[
\hbox{\text{$\bigwedge^3$}}J\longrightarrow \hbox{\text{$\bigwedge^3$}} J/\hbox{\text{$\bigwedge^3$}}L.
\]
The quotient $\bigwedge^3 J/\bigwedge^3L$ is the $\Z_2$ vector space 
with basis $\{a,b,c,d,e,f\}$, where
\[
\begin{matrix}
a=v\wedge w \wedge wx,\qquad & b=v\wedge w \wedge w^2,\qquad & c=v\wedge w \wedge w^2x,\\[2pt]
d=v\wedge wx \wedge w^2,\hfill& e=v\wedge wx \wedge w^2x,\hfill & f=v\wedge w^2 \wedge w^2x.\hfill
\end{matrix}
\]
The computation of the $D\times D$ action gives:
\[
\arraycolsep=1.4pt\def\arraystretch{1.2}
\begin{array}{c||c|c|c|c|c|c|}
& a&b&c&d&e&f\\
\hline
\hline
(x,1) & \,a+c+d+f\,&\,e\,&\,d+f\,&\,c+f\,&\,b\,&\,f\,\\
\hline
(1,x) & a&e&d&c&b&f\\
\hline
(y,1) & a+c+d+f&b&c+f&d+f&e&f\\
\hline
\end{array}
\]
The action of each of the elements $(1,y)$, $(\bar{y},1)$, and $(1,\bar{y})$ is the same as that of $(y,1)$.

We can thus see that, modulo the $D\times D$ action, we have 
\[
b=e,\qquad c=d, \qquad f=0,
\]
and that there are no more relations. Therefore our element $s$, which corresponds to $d=c$, is not $0$ modulo the $D\times D$ action. This completes the proof of Proposition~\ref{propKlein}.

\section{Topological complexity of $N_g$, $g\geq 2$}\label{SectionNg}

We use the following presentation of the fundamental group of $N_g$:
\[
\pi_1(N_g)=\langle a_1,\dots,a_g \,|\, a_1^ 2\cdots a_g^ 2=1\rangle,
\]
and consider, for $g\geq 3$, the map $\phi\colon N_g\to N_{g-1}$ induced (up to homotopy) by the homomorphism $\varphi\colon \pi_1(N_g)\to \pi_1(N_{g-1})$
\[
a_i \mapsto  a_i, \quad \mbox{for } 1\leq i\leq g-1 \quad \mbox{and } a_g\mapsto 1.
\]

\begin{lem} \label{lem:lem11} For $g\geq 3$, 
$H_2(\phi)\colon H_2(N_g; \Z_2) \to H_2(N_{g-1};\Z_2)$ is an isomorphism.
\end{lem}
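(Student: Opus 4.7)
The plan is to realize $\phi$ by a concrete cellular map and to compute its effect on the generator of $H_2(N_g;\Z_2)\cong\Z_2$. Equip $N_g$ and $N_{g-1}$ with the standard CW structures coming from the given presentations: one $0$-cell, $g$ (resp.\ $g-1$) $1$-cells $a_1,\dots,a_g$, and a single $2$-cell attached along $a_1^2\cdots a_g^2$ (resp.\ $a_1^2\cdots a_{g-1}^2$). The mod-$2$ cellular boundary $C_2\to C_1$ sends the top cell to $2(a_1+\cdots+a_g)\equiv 0$, so both $H_2(N_g;\Z_2)$ and $H_2(N_{g-1};\Z_2)$ are $\Z_2$, generated by the top cell. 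It thus suffices to show the induced map on top cells is nonzero mod $2$.

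Next I would build a cellular representative $\widetilde\phi\colon N_g\to N_{g-1}$ of $\phi$: on the $1$-skeleton, send $a_i$ to $a_i$ for $1\le i\le g-1$ and collapse $a_g$ to the basepoint. Under this map the attaching loop $a_1^2\cdots a_g^2$ of the top cell of $N_g$ becomes the composition of a degree-one self-map $\rho\colon S^1\to S^1$ (collapsing to the basepoint the two subarcs of $S^1$ that originally traced $a_g$) with the attaching map of the top cell of $N_{g-1}$. Extending $\rho$ to a degree-one map $\widetilde\rho\colon D^2\to D^2$ (for example, by collapsing the two radial wedges over those subarcs) and composing with the characteristic map of the top cell of $N_{g-1}$ extends $\widetilde\phi$ across the $2$-cell of $N_g$. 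Because $N_{g-1}$ is aspherical and $\widetilde\phi$ induces $\varphi$ on $\pi_1$, we have $\widetilde\phi\simeq\phi$.

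The induced map $C_2(N_g;\Z_2)\to C_2(N_{g-1};\Z_2)$ equals the mod-$2$ degree of the self-map of $S^2=D^2/\partial D^2$ induced by $\widetilde\rho$, and by naturality of the connecting isomorphism $H_2(D^2,\partial D^2)\cong H_1(\partial D^2)$ this degree coincides with the degree of $\rho$, namely $1$. Hence $H_2(\phi)$ is the identity $\Z_2\to\Z_2$, proving the lemma.

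No step is truly an obstacle; the only point worth verifying carefully is the identification of the restriction of $\widetilde\phi$ to the boundary of the top cell of $N_g$ with $\rho$ followed by the $N_{g-1}$ attaching map, and this is immediate from the fact that collapsing the two $a_g$ subarcs reparametrizes $a_1^2\cdots a_g^2$ into $a_1^2\cdots a_{g-1}^2$.
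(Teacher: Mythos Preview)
Your proof is correct and takes a more elementary route than the paper's. The paper constructs the finite free $\Z[\pi_1(N_g)]$-resolution coming from Fox calculus on the one-relator presentation, exhibits an explicit $\varphi$-equivariant chain map $M_\bullet^g\to M_\bullet^{g-1}$ sending the top generator $\omega_g$ to $\omega_{g-1}$, and observes that after applying $-\otimes_{\pi}\Z_2$ this gives the identity on $\Z_2$ in degree $2$. You instead work directly with the cellular chain complex of the base spaces over $\Z_2$, build a concrete cellular model of $\phi$, and verify the top-cell degree via the boundary isomorphism $H_2(D^2,\partial D^2)\cong H_1(\partial D^2)$. The two arguments are cousins: the paper's resolution is the cellular chain complex of the universal cover, and its chain map is precisely the lift of your cellular map; what you gain is that you avoid the group-ring formalism and the Fox derivatives entirely, which is cleaner for this isolated lemma. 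What the paper's approach buys is consistency with the resolutions already in use in \S\ref{fundcycle}, so no new machinery is introduced. Either way, the content is the single observation that the collapse $a_g\mapsto 1$ sends the relator $a_1^2\cdots a_g^2$ to $a_1^2\cdots a_{g-1}^2$ with multiplicity one.
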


\begin{proof} Again, a finite free $\Z[\pi_1(N_g)]$-resolution of $\Z$ can be obtained using Fox calculus. Explicitly, writing $\alpha=a_1^2\cdots a_g^2$, a finite free $\Z[\pi_1(N_g)]$-resolution of $\Z$ is given by (see, for instance, \cite{MartinsGoncalves}):
\[
\xymatrix{
M^g_2\ar[r]^{\partial} & M^g_1 \ar[r]^{\partial} & M^g_0 \ar[r]^{\varepsilon}&\Z
}
\]
where $M^g_0$, $M^g_1$, and $M^g_2$ are free $\Z[\pi_1(N_g)]$-modules with bases $\{e^0\}$, $\{f^1_1,\dots,f^1_g\}$, and $\{\omega_g\}$ respectively. The differential is given by
\[
\partial(f^1_i)=(a_i-1)e^0, \quad \partial(\omega_g)=\sum\limits_{i=1}^g\frac{\partial \alpha}{\partial a_i}f^1_i=\sum\limits_{i=1}^g a_1^2\cdots a^2_{i-1}(1+a_i)f^1_i.
\]
The differential $\partial(\omega_g)$ is given above in terms of the Fox free derivatives of $\alpha$.
The map
\[
e^0\mapsto e^0, \quad 
\left\{\begin{array}{l}
f^1_i\mapsto f^1_i\,\, {(1\leq i\leq g-1),} \\[2pt]
f^1_g \mapsto 0,
\end{array}
\right.
\quad
\omega_g \mapsto \omega_{g-1}
\]
gives a chain map $M^g_{\bullet} \to M^{g-1}_{\bullet}$ which induces the required isomorphism. 
\end{proof}

\begin{thm} For $g\geq 2$, $\TC(N_g)=4$.
\end{thm}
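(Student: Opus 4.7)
The plan is to combine Proposition \ref{Z2coeff} with an induction on $g$ that descends along the map $\phi\colon N_g\to N_{g-1}$ of Lemma \ref{lem:lem11}. The base case $g=2$ is precisely Theorem \ref{TC(K)}, already established via Proposition \ref{propKlein} in Section \ref{SectionKlein}. So it will be enough to show that, for each $g\geq 3$, $\TC(N_{g-1})=4$ implies $\TC(N_g)=4$; by Proposition \ref{Z2coeff} this reduces to deducing $\nu_g^4(\Omega^g_{\Z_2})\neq 0$ from the inductive hypothesis $\nu_{g-1}^4(\Omega^{g-1}_{\Z_2})\neq 0$, where $\pi_h=\pi_1(N_h)$ and $\Omega^h_{\Z_2}\in B_4(\pi_h\times \pi_h;\Z_2)$ denotes a cycle representing the generator of $H_4(N_h\times N_h;\Z_2)$.

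The key tool will be the naturality of the canonical cocycle $\nu^n$ with respect to group homomorphisms. The map $\varphi\colon \pi_g\to \pi_{g-1}$ induces a chain map on bar resolutions and a compatible map $I(\varphi)^{\otimes 4}\colon I(\pi_g)^{\otimes 4}\to I(\pi_{g-1})^{\otimes 4}$; since the cocycle given in Lemma \ref{explicitpower} is written purely in terms of products and inverses of group elements, it is natural in $\pi$, and after passing to $\Z_2$-coinvariants the square
\[
\xymatrix{
B_4(\pi_g\times \pi_g;\Z_2)\ar[r]^-{\nu^4_g}\ar[d] & I(\pi_g)^{\otimes 4}\otimes_{\pi_g\times \pi_g}\Z_2\ar[d]\\
B_4(\pi_{g-1}\times \pi_{g-1};\Z_2)\ar[r]^-{\nu^4_{g-1}} & I(\pi_{g-1})^{\otimes 4}\otimes_{\pi_{g-1}\times \pi_{g-1}}\Z_2
}
\]
commutes. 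By Lemma \ref{lem:lem11} and the K\"unneth theorem, $\phi\times \phi$ induces an isomorphism on $H_4(-;\Z_2)$, so the image of $\Omega^g_{\Z_2}$ under the left vertical arrow represents the generator of $H_4(N_{g-1}\times N_{g-1};\Z_2)$ and hence agrees with $\Omega^{g-1}_{\Z_2}$ up to a boundary. Chasing the diagram yields $I(\varphi)^{\otimes 4}(\nu_g^4(\Omega^g_{\Z_2}))=\nu_{g-1}^4(\Omega^{g-1}_{\Z_2})$, which is nonzero by the inductive hypothesis; this forces $\nu_g^4(\Omega^g_{\Z_2})\neq 0$, as required.

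I do not expect any serious obstacle in executing this plan: all of the geometric substance has already been absorbed into Proposition \ref{propKlein} (the obstruction computation for the Klein bottle) and Lemma \ref{lem:lem11} (the descent statement on top-dimensional $\Z_2$-homology). The only remaining task is to record the naturality of $\nu^4$ along $\varphi$, which is immediate from the explicit formula of Lemma \ref{explicitpower}, and to choose the cycle $\Omega^g_{\Z_2}$ compatibly with the induction (this can always be arranged, since the relevant maps are isomorphisms on the homology in question).
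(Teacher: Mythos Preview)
Your proposal is correct and follows essentially the same approach as the paper: induct on $g$ with base case Proposition~\ref{propKlein}, use the naturality of $\nu^4$ along $\varphi$ together with Lemma~\ref{lem:lem11} (and K\"unneth) to push the nonvanishing of $\nu^4(\Omega_{\Z_2})$ up from $g-1$ to $g$, and conclude via Proposition~\ref{Z2coeff}. The only cosmetic remark is that the induction hypothesis should be stated as ``$\nu_{g-1}^4(\Omega^{g-1}_{\Z_2})\neq 0$'' rather than ``$\TC(N_{g-1})=4$'', since Proposition~\ref{Z2coeff} only gives one implication; your argument already does this in practice.
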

\begin{proof} Let $g\geq 3$.
Writing $\pi_g$ and $\pi_{g-1}$ in place of $\pi_1(N_g)$ and $\pi_1(N_{g-1})$ and $\nu_g$, $\nu_{g-1}$ for the associated canonical cocycles, the homomorphism $\varphi$ induces a commutative diagram
$$\xymatrix{
B_{4}(\pi_g\times \pi_g)\otimes_{\pi_g\times \pi_g}{\Z_2} \ar[d]\ar[rr]^{\nu_g^4\otimes\,\mathrm{id}} && I(\pi_g)^{\otimes 4}\otimes_{\pi_g\times \pi_g}{\Z_2}\ar[d]\\
B_{4}(\pi_{g-1}\times \pi_{g-1})\otimes_{\pi_{g-1}\times \pi_{g-1}}{\Z_2} \ar[rr]^{\nu_{g-1}^4\otimes\,\mathrm{id}} && I(\pi_{g-1})^{\otimes 4}\otimes_{\pi_{g-1}\times \pi_{g-1}}{\Z_2}.\\
}$$
By Lemma \ref{lem:lem11}, the map $H_4(\phi\times \phi)\colon H_4(N_g\times N_g;\Z_2)\to H_4(N_{g-1}\times N_{g-1};\Z_2)$ is an isomorphism, so that the left hand vertical map in the diagram above maps a cycle $\Omega^g_{\Z_2}$ representing the generator of $H_4(N_g\times N_g;\Z_2)$ to 
a cycle $\Omega^{g-1}_{\Z_2}$ representing the generator of $H_4(N_{g-1}\times N_{g-1};\Z_2)$. The commutativity of the diagram implies that 
$\nu^4_g(\Omega^g_{\Z_2})\neq 0$ as soon as $\nu^4_{g-1}(\Omega^{g-1}_{\Z_2})\neq 0$. Since $\nu^4_g(\Omega^g_{\Z_2})\neq 0$ for $g=2$, we therefore have $\nu^4_g(\Omega^g_{\Z_2})\neq 0$ for any $g\geq 2$. Proposition \ref{Z2coeff} permits us to complete the proof.
\end{proof}

\section*{Acknowledgements}
We thank Michael Farber for a number of valuable discussions concerning the theme of this article, Jes\'us Gonz\'alez and Mark Grant for many helpful suggestions on our first version, the anonymous referees for useful comments, and the Mathematisches Forschungsinstitut Oberwolfach, where this collaboration started during the mini-workshop \textit{Topological Complexity and Related Topics}.

\newcommand{\arxiv}[1]{{\texttt{\href{http://arxiv.org/abs/#1}{{arXiv:#1}}}}}

\newcommand{\MRh}[1]{\href{http://www.ams.org/mathscinet-getitem?mr=#1}{MR#1}}

\end{document}